\theoremstyle{remark}{
\newtheorem{Def}{{\rm Definition}}
\newtheorem{Ex}{{\rm Example}}
\newtheorem{Rem}{{\rm Remark}}

}
\theoremstyle{plain}{

\newtheorem{Prop}{Proposition}
\newtheorem{Thm}{Theorem}
\newtheorem{MainThm}{Main Theorem}

}
\begin{document}
\title[On the images of special generic maps with simple structures]{The topologies and the differentiable structures of the images of special generic maps having simple structures}
\author{Naoki Kitazawa}
\keywords{Singularities of differentiable maps; special generic maps. Cohomology classes; products of cohomology classes. Compact manifolds: the topologies and the differentiable structures of compact manifolds. \\
\indent {\it \textup{2020} Mathematics Subject Classification}: Primary~57R45. Secondary~57R19.}
\address{Institute of Mathematics for Industry, Kyushu University, 744 Motooka, Nishi-ku Fukuoka 819-0395, Japan\\
 TEL (Office): +81-92-802-4402 \\
 FAX (Office): +81-92-802-4405 \\
}
\email{n-kitazawa@imi.kyushu-u.ac.jp}
\urladdr{https://naokikitazawa.github.io/NaokiKitazawa.html}
\maketitle
\begin{abstract}

{\it Special generic} maps are smooth maps at each singular point of which we can represent as $(x_1, \cdots, x_m) \mapsto (x_1,\cdots,x_{n-1},\sum_{k=n}^{m}{x_k}^2)$ for suitable coordinates.
Morse functions with exactly two singular points on homotopy spheres and canonical projections of unit spheres are special generic.
They are known to restrict the topologies and the differentiable structures of the manifolds in various situations. It also seems to be true that various manifolds admit such maps.

This article first presents a special generic map on a $7$-dimensional manifold and the image. 
This result also seems to present a new example of $7$-dimensional closed and simply-connected manifolds having non-vanishing triple Massey products and seems to be a new work related to similar works by Dranishnikov and Rudyak.
We also review results on vanishing of products of cohomology classes, previously obtained by the author. The images of special generic maps are smoothly immersed manifolds whose dimensions are equal to the dimensions of the manifolds of the targets. They know much of the topologies and the differentiable structures of the manifolds of the domains. The author studied the topologies of these images previously and studies on homology groups, cohomology rings and structures of them for special generic maps having simple structures are presented as new results.

\end{abstract}


\maketitle
\section{Introduction.}
\label{sec:1}
Throughout the present paper, manifolds and maps between manifolds are smooth or of class $C^{\infty}$. Diffeomorphisms on smooth manifolds are assumed to be smooth. We define the {\it diffeomorphism group} of a smooth manifold is the group of all diffeomorphisms there. We assume that the structure groups of bundles whose fibers are smooth manifolds are subgroups of the diffeomorphism groups unless otherwise stated. In other words the bundles are {\it smooth}.

${\mathbb{R}}^k$ denotes the $k$-dimensional Euclidean space for any integer $k \geq 1$ where
for ${\mathbb{R}}^1$, $\mathbb{R}$ is also used. It is regarded as a smooth manifold canonically and it is regarded as the Riemannian manifold endowed with the standard Euclidean metric. 
$||x|| \geq 0$ deno4tes the value of the standard Euclidean norm at $x \in {\mathbb{R}}^k$ and equivalently, the distance between $x$ and the origin $0$.  
The $k$-dimensional unit sphere $S^k$ is the set of all points $x$ in ${\mathbb{R}}^{k+1}$ satisfying $||x||=1$. It is regarded as a $k$-dimensional smooth closed submanifold with no boundary. A smooth manifold whose dimension is positive is said to be a {\it homotopy sphere} if it is homeomorphic to a unit sphere and a {\it standard sphere} if it is diffeomorphic to one. One-point sets are in considerable cases regarded as homotopy spheres and standard spheres. In the proof of Theorem \ref{thm:3}, one-point sets are regarded as submanifolds playing roles as homotopy spheres play. The $k$-dimensional unit disk $D^k$ is the set of all points $x$ in ${\mathbb{R}}^{k}$ satisfying $||x|| \leq 1$. It is regarded as a $k$-dimensional compact and smooth submanifold.

$\mathbb{N} \subset \mathbb{R}$ denotes the set of all positive integers.

The class of {\it linear} bundles is a subclass of the class of smooth bundles. A bundle is {\it linear} if the fiber is regarded as a unit sphere or a unit disk in a Euclidean space and the structure group acts linearly in a canonical way.

A {\it singular} point $p \in X$ of a smooth map $c:X \rightarrow Y$ is a point at which the rank of the differential ${dc}_p$ i4s smaller than both the dimensions $\dim X$ and $\dim Y$. $S(c)$ denotes the set of all singular points of $c$ (the {\it singular set} of $c$). We call $c(S(c))$ the {\it singular value set} of $c$. We call $Y-c(S(c))$ the {\it regular value set} of $c$. A {\it singular {\rm (}regular{\rm )} value} is a point in the singular (resp. regular) value set of the map.
\subsection{Special generic maps.}
{\it Special generic} maps are smooth maps at each singular point of which we can represent as $$(x_1, \cdots, x_m) \mapsto (x_1,\cdots,x_{n-1},\sum{k=n}^{m}{x_k}^2)$$
for suitable coordinates. Note that the restriction to the singular set is a smooth immersion of an ($n-1$)-dimensional closed smooth submanifold with no boundary, for example. Morse functions with exactly two singular points on homotopy spheres or functions playing important roles in so-called Reeb's theorem and canonical projections of unit spheres are special generic. The restrictions to the singular sets are embeddings in these cases.
As another example, an $m$-dimensional closed manifold $M$ represented as a connected sum of the $l>0$ manifolds diffeomorphic to each of the manifolds in $\{S^{l_j} \times S^{m-l_j}\}_{j=1}^{l}$ ($1 \leq l_j \leq n-1$) admits a special generic map $f:M \rightarrow {\mathbb{R}}^n$ such that $f {\mid}_{S(f)}$ is an embedding and that $f(M)$ is represented as a boundary connected sum of the $l$ manifolds diffeomorphic to each of the manifolds in $\{S^{l_j} \times D^{n-l_j}\}_{j=1}^{l}$ where (boundary) connected sums are considered in the smooth category. What makes special generic maps attractive is restrictions they pose on the topologies and the differentiable structures of the manifolds.

\begin{Ex}[\cite{calabi}, \cite{saeki}, \cite{saeki2}, \cite{saekisakuma} and \cite{wrazidlo} for example.]
An $m$-dimensional homotopy sphere always admits a special generic map into ${\mathbb{R}}^2$ for $m \neq 1,4$. If an $m$-dimensional homotopy sphere admits a special generic map into ${\mathbb{R}}^n$ for $n=m-3$, $n-2$ or $n-1$, then it is a standard sphere. Furthermore, $7$-dimensional oriented homotopy spheres of 14 types of all 28 types do not admit special generic maps into ${\mathbb{R}}^3$.   
Last, there exist pairs of mutually homeomorphic $4$-dimensional closed, connected and smooth manifolds such that exactly one of each pair admits a special generic map into ${\mathbb{R}}^3$.  
\end{Ex}

On the other hand, various manifolds other than the presented ones admit special generic maps into suitable Euclidean spaces. See \cite{kitazawa7}, \cite{kitazawa11} and \cite{kitazawa12} for example.

Moreover, the following theorem enables us to obtain a special generic map canonically from a smoothly immersed compact manifold of codimension zero.

\begin{Prop}[\cite{saeki}.]
\label{prop:1}
Let $m>n \geq 1$ be integers. 
\begin{enumerate}
\item
\label{prop:1.1}
Let $f:M \rightarrow {\mathbb{R}}^n$ be a special generic map on an $m$-dimensional closed and connected manifold $M$. Then there exists a 
smooth surjection $q_f:M \rightarrow W_f$ onto an $n$-dimensional compact manifold such that $q_f(S(f))= \partial W_f$ and a smooth immersion $\bar{f}:W_f \rightarrow {\mathbb{R}}^n$ such that $f=\bar{f} \circ q_f$.
\item
\label{prop:1.2}
Let ${\bar{f}}_n:W_n \rightarrow {\mathbb{R}}^n$ be a smooth immersion of an $n$-dimensional compact and connected manifold. Then there exists a special generic map $f:M \rightarrow {\mathbb{R}}^n$ on an $m$-dimensional closed, connected and orientable manifold $M$ such that the following properties hold where we abuse notation for the map $f$ just before.
\begin{enumerate}
\item $W_f$ is identified with $W_n$ and $\bar{f}={\bar{f}}_n$. 
\item There exists a small collar neighborhood $C(\partial W_f)$ such that the composition of $q_f {\mid}_{{q_f}^{-1}(C(\partial W_f))}$ with the canonical projection to $\partial W_f$ gives a trivial linear bundle whose fiber is diffeomorphic to $D^{m-n+1}$.
\item $q_f {\mid}_{{q_f}^{-1}(W_f-{\rm Int}(C(\partial W_f)))}$ gives a trivial smooth bundle whose fiber is diffeomorphic to $S^{m-n}$.
\end{enumerate}
\end{enumerate}
\end{Prop}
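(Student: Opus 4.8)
The two statements are each a construction followed by a verification, and I would treat them separately. For part~(\ref{prop:1.1}) the plan is to let $W_f$ be the \emph{Stein factorization} of $f$: on $M$ declare $p\sim p'$ whenever $f(p)=f(p')$ and $p,p'$ lie in the same connected component of $f^{-1}(f(p))$, put $W_f:=M/\!\sim$ with quotient map $q_f$, and let $\bar f:W_f\to\mathbb{R}^n$ be the induced continuous map, so that $f=\bar f\circ q_f$ tautologically. Since $M$ is compact, $f$ is proper, and I would use this to see that $W_f$ is Hausdorff and compact. The core of the argument is the local analysis. Over a regular value, $f$ is a proper submersion, its fibres are closed $(m-n)$-manifolds, and locally (Ehresmann) $f$ is a trivial fibre bundle, so near such a point $W_f$ is locally homeomorphic to $\mathbb{R}^n$. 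Over a singular value, the normal form $(x_1,\dots,x_m)\mapsto(x_1,\dots,x_{n-1},\sum_{k\ge n}x_k^2)$ shows that the component of the fibre through a singular point is a single point, that $f\mid_{S(f)}$ is an immersion of an $(n-1)$-manifold, and that $q_f$ collapses the local picture onto a half-space $\mathbb{R}^{n-1}\times[0,\infty)$. Hence $W_f$ is a topological $n$-manifold with $q_f(S(f))\subseteq\partial W_f$; the reverse inclusion follows because a boundary point of $W_f$ whose $q_f$-preimage contained no singular point would have a neighbourhood over which $f$ is a submersion, forcing $W_f$ to be locally boundaryless there. Finally I would put on $W_f$ the smooth structure pulled back through $\bar f$ away from $q_f(S(f))$ and, near $q_f(S(f))$, the charts $\mathbb{R}^{n-1}\times[0,\infty)$ coming from the normal form (in which $q_f$ reads $(u,y)\mapsto(u,|y|^2)$ and $\bar f$ is the inclusion of a half-space up to a local diffeomorphism of $\mathbb{R}^n$), check that the two kinds of charts are $C^\infty$-compatible, and conclude that $q_f$ is smooth and $\bar f$ an immersion.

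For part~(\ref{prop:1.2}) I would first observe that $W_n$ is automatically orientable: a codimension-zero immersion into $\mathbb{R}^n$ is a local diffeomorphism, so $\bar f_n$ pulls back the standard orientation; in particular $\partial W_n\neq\emptyset$, since a closed $n$-manifold cannot immerse into $\mathbb{R}^n$. Fix a collar $C(\partial W_f)\cong\partial W_n\times[0,\varepsilon)$. Then I would build $M$ by gluing two explicit pieces: over $W_n\setminus(\partial W_n\times[0,\varepsilon/2))$ take the product $(W_n\setminus(\partial W_n\times[0,\varepsilon/2)))\times S^{m-n}$ with $q_f$ the projection, and over the collar take $\partial W_n\times D^{m-n+1}_{\sqrt\varepsilon}$ with $q_f(w,y)=(w,|y|^2)$ landing in $\partial W_n\times[0,\varepsilon)$; on the overlap $\partial W_n\times(\varepsilon/2,\varepsilon)$ these match via $(w,y)\mapsto(w,|y|^2,y/|y|)$, using $\{\varepsilon/2<|y|^2<\varepsilon\}\cong S^{m-n}\times(\varepsilon/2,\varepsilon)$. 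Setting $f:=\bar f_n\circ q_f$, the map $f$ is a submersion away from $q_f^{-1}(\partial W_n)$ (there $q_f$ is a bundle projection and $\bar f_n$ a local diffeomorphism), while near $q_f^{-1}(\partial W_n)$, in the local coordinates of the immersion $\bar f_n$, it has precisely the form $(u,y)\mapsto(u,|y|^2)$; hence $f$ is special generic. By construction $W_f=W_n$ and $\bar f=\bar f_n$, a sub-collar $C(\partial W_f)$ has $q_f$-preimage $\partial W_n\times D^{m-n+1}$ over which $q_f$ followed by the projection to $\partial W_n$ is the trivial linear $D^{m-n+1}$-bundle, and the complementary piece is the trivial smooth $S^{m-n}$-bundle. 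Finally $M$ is closed, connected because $m-n\ge 1$ makes $S^{m-n}$ connected (and everything attaches to the $S^{m-n}$-bundle part), and orientable because $M\cong\partial(W_n\times D^{m-n+1})$ with $W_n$ orientable.

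The fiddly point in part~(\ref{prop:1.1}) is checking that the two families of charts on $W_f$ fit into a single smooth atlas and that $\bar f$ is an immersion — not merely a topological embedding — along $\partial W_f$; the corresponding point in part~(\ref{prop:1.2}) is arranging the gluing so that $f$ literally acquires the normal form near the singular set, which is why I prefer the explicit chart-by-chart construction above to the slicker description $M=\partial(W_n\times D^{m-n+1})$, where one still has to smooth a corner and then recognise the normal form. I expect the Stein-factorization analysis in part~(\ref{prop:1.1}) — showing the quotient is a manifold whose boundary is exactly the image of the singular set, and carrying a smooth structure for which $q_f$ is smooth and $\bar f$ an immersion — to be the main conceptual step; once that is in place, everything in both parts is bookkeeping.
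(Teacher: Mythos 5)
The paper gives no proof of this proposition; it is quoted directly from Saeki's work \cite{saeki}, and your argument --- Stein factorization with the local normal form for part~(\ref{prop:1.1}), and the glued model $\bigl(W_n\setminus\mathrm{Int}\,C(\partial W_n)\bigr)\times S^{m-n}\cup\partial W_n\times D^{m-n+1}$ (equivalently $\partial(W_n\times D^{m-n+1})$ after corner-smoothing) for part~(\ref{prop:1.2}) --- is exactly the standard proof found there. The proposal is correct and identifies the right delicate points (smooth compatibility of the two chart families on $W_f$, and recovering the normal form along the gluing region).
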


\subsection{Main theorems and the content of the present paper.}
In the present paper, first, we show the following result on a manifold satisfying a non-vanishing property for a product of cohomology classes. 

\begin{MainThm}
\label{mainthm:1}
There exist a $7$-dimensional closed and simply-connected manifold $M$ having a non-vanishing triple Massey product and a special generic map $f:M \rightarrow {\mathbb{R}}^6$. 
\end{MainThm}
In the next section, we prove this and present several remarks, without explicit exposition on {\it triple Massey products}. The third section is devoted to studies of compact manifolds $W_f$ for special generic maps $f$ for a newly introduced class of special generic maps. 
Main Theorem \ref{mainthm:1} motivates the author to introduce this class. Studies of this type such as \cite{kitazawa11} and \cite{kitazawa12} have been done by the author previously. Before them Nishioka also obtained a result \cite{nishioka} motivating the author to do related works. Furthermore, these images of the maps know much about the topologies and the differentiable structures of the closed manifolds of the domains. This general fact is presented in the last.
One of main theorems there is the following or Theorem \ref{thm:2}. We explain undefined notions and notation in the third section.

\begin{MainThm}
\label{mainthm:2}
Let $n \geq k \geq 2$ be integers.
\begin{enumerate}
\item \label{mthm:2.1}
For any $n$-dimensional compact, {\rm (}$k-1${\rm )}-connected and smooth manifold $X$ smoothly immersed {\rm (}embedded{\rm )} into ${\mathbb{R}}^n$ which is an {\rm SIE-$(K,{\mathcal{M}}_{\rm \mathcal{CS}Diff})$} {\rm (resp. SEE-$(K,{\mathcal{M}}_{\rm \mathcal{CS}Diff})$)}, any {\rm root} of the polyhedron $K$ $X$ collapses to consists of diffeomorphism types for {\rm(}$k-1${\rm )}-connected manifolds.  
\item \label{mthm:2.2}
In {\rm (\ref{mthm:2.1})}, suppose that $n \leq 3k$ and that the set ${\mathcal{M}}_{\rm \mathcal{CS}Diff}$ of diffeomorphism types is sufficiently large. Here we can choose a root $R_K \subset {\mathcal{M}}_{\rm \mathcal{CS}Diff}$ of $K$ and $K$ is obtained by a finite iteration of taking a bouquet starting from finitely many polyhedra satisfying either of the following two.
\begin{enumerate}
\item The product of a homotopy sphere in a diffeomorphism type in $R_K$ and a polyhedron obtained by a finite iteration of taking a bouquet starting from at least $2$ homotopy spheres in diffeomorphism types in $R_K$. In this case we can choose $R_K$ as a root consisting of diffeomorphism types for homotopy spheres.
\item A manifold represented as a connected sum of finitely many closed and {\rm (}$k-1${\rm )}-connected smooth manifolds in $R_K$ where the connected sum is taken in the smooth category.
\end{enumerate}  
\item \label{mthm:2.3}
Suppose that a set ${\mathcal{M}}_{\rm \mathcal{CS}Diff}$ of diffeomorphism types is sufficiently large. Let $K$ be an elementary polyhedron whose root can be a subset of ${\mathcal{M}}_{\rm \mathcal{CS}Diff}$. 
\begin{enumerate}
\item 
\label{mthm:2.3.1}
Assume also that $K$ is PL homeomorphic to one obtained by a finite iteration of taking a bouquet starting from finitely many polyhedra satisfying at least one of the following two conditions.
\begin{enumerate}
\item 
\label{mthm:2.3.1.1}
The product of a closed and {\rm (}$k-1${\rm )}-connected manifold $Y$ in diffeomorphism types in $R_K$ we can smoothly immerse {\rm (}resp. embed{\rm )} into ${\mathbb{R}}^{\dim Y+1}$ and a polyhedron obtained by a finite iteration of taking a bouquet starting from finitely many closed, {\rm(}$k-1${\rm )}-connected and smooth manifolds which can be smoothly embedded into one-dimensional higher Euclidean spaces where we choose a suitable root $R_K \subset {\mathcal{M}}_{\rm \mathcal{CS}Diff}$.
\item
\label{mthm:2.3.1.2}
A closed and {\rm (}$k-1${\rm )}-connected manifold in diffeomorphism types in $R_K$ we can smoothly immerse {\rm (}resp. embed{\rm )} into ${\mathbb{R}}^n$ where we choose a suitable root $R_K \subset {\mathcal{M}}_{\rm \mathcal{CS}Diff}$.
\end{enumerate}
\item
\label{mthm:2.3.2}
Assume also that $\dim K<n$ holds.
\end{enumerate}
Then there exists an $n$-dimensional compact, {\rm (}$k-1${\rm )}-connected and smooth manifold $X$ smoothly immersed {\rm (}embedded{\rm )} into ${\mathbb{R}}^n$ which is an SIE-$(K,{\mathcal{M}}_{\rm \mathcal{CS}Diff})$ {\rm (}resp. SEE-$(K,{\mathcal{M}}_{\rm \mathcal{CS}Diff})${\rm )}.
\end{enumerate}
\end{MainThm}
We also present new results Theorems \ref{thm:3}--\ref{thm:5}. Most of Theorem \ref{thm:3} are also essentially known results or corollaries to them or obtained by fundamental arguments on algebraic topology abd differential topology. Theorems \ref{thm:4} and \ref{thm:5} explicitize Theorem \ref{thm:2}. For them we apply explicit theory on  differential topology of manifolds whose dimensions are smaller than $6$. 

These studies can be also an important part of systematic studies on combinatorial and geometric aspects such as explicit classifications of compact manifolds with non-empty boundaries whose dimensions are general. 
For dimensions smaller than $5$, such manifolds are important tools, and actively studied as a part of an important topic of low dimensional geometry. For example, complementary spaces of (the interiors of small tubular neighborhoods of) links or 1-dimensional closed manifolds (smoothly embedded) in 3-dimensional closed, connected and orientable manifolds are fundamental objects in the knot theory and the theory of $3$-dimensional manifolds: see \cite{hempel} for example. As another example, $4$-dimensional manifolds obtained by attaching so-called {\it $1$-handles} and {\it $2$-handles} to the boundary of copies of the $4$-dimensional unit disk are fundamental objects in the theory of $4$-dimensional manifolds. The way of the attachment determines the differentiable structure of a closed and connected manifold obtained in a canonical way uniquely, for example. For the theory of $4$-dimensional manifolds, see \cite{gompgstipsicz} for example.
 
Recently, topological properties of complementary spaces of union of affine subspaces of real or complex vector spaces, or so-called ({\it affine}) {\it subspace arrangements}, are actively studied. For this, see \cite{zieglerzivaljevi} for example. See also \cite{ishikawaoyama} as a recent study on this topic for example. We can find various books and articles on this attractive topic. Such complementary spaces can be, in suitable senses, essentially regarded as compact manifolds with non-empty boundaries via suitable compactifications. As related studies, for example, see \cite{zariski}--\cite{zariski3} and see also \cite{guervilleballe} and other books or articles for studies on complex projective curves on complex projective spaces and the complementary spaces of the curves.  
However, we do not know explicit studies of combinatorial and geometric properties of these compact manifolds in general scenes and more general compact manifolds with non-empty boundaries so much.

\section{Proof of Main Theorem \ref{mainthm:1} and remarks.}
For {\rm (}{\it triple}{\rm )} {\it Massey products}, see \cite{massey} and see also \cite{kraines}, \cite{taylor} and \cite{taylor2}.
\begin{proof}[A proof of Main Theorem \ref{mainthm:1}.]
First we consider a so-called {\it Borromean link} of dimension $3$ in ${\mathbb{R}}^6$ in the smooth category and take a small closed tubular neighborhood. We consider a copy of the $6$-dimensional unit disk smoothly embedded into ${\mathbb{R}}^6$ and containing the closed tubular neighborhood in the interior. We remove the interior of the closed tubular neighborhood from the disc and obtain a new $6$-dimensional connected and compact manifold smoothly embedded into ${\mathbb{R}}^6$. For this resulting $6$-dimensional manifold, we apply Proposition \ref{prop:1} (\ref{prop:1.2}) and we have a desired special generic map on a $7$-dimensional closed and simply-connected manifold $M$ having a non-vanishing triple Massey product into ${\mathbb{R}}^6$.
\end{proof}
For {\it Borromean links}, see \cite{massey} for example.
\begin{Rem}
\label{rem:1}
According to \cite{kitazawa7}, no $7$-dimensional closed and simply-connected manifold $M$ having a non-vanishing triple Massey product admits a special generic map into $f:M \rightarrow {\mathbb{R}}^n$ for $n=1,2,3,4,5$.  
This result is presented as a corollary to a general theorem there. 
\end{Rem}

Related to Remark \ref{rem:1}, we present a main result of \cite{kitazawa9}, which is on vanishing of (cup) products of cohomology classes.
\begin{Thm}[\cite{kitazawa9}]
\label{thm:1}
Let $m>n \geq 1$ be integers and $l>0$ be an integer. Let $A$ be a commutative ring. Let $f:M \rightarrow N$ be a special generic map from an $m$-dimensional closed and connected manifold $M$ into an $n$-dimensional non-closed and connected manifold $N$ with no boundary. For any sequence $\{a_j\}_{j=1}^l \subset H^{\ast}(M;A)$ such that the degree of each element is smaller than or equal to $m-n$ and that the sum of the degrees is greater than or equal to $n$, the product ${\prod}_{j=1}^l a_j$ vanishes.   
\end{Thm}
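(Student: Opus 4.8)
The plan is to exploit the factorization $f = \bar f \circ q_f$ from Proposition \ref{prop:1} (\ref{prop:1.1}), together with the fact that $W_f$ is an $n$-dimensional manifold with non-empty boundary. First I would argue that, since $N$ is non-closed, connected, with no boundary and of dimension $n$, and $\bar f : W_f \to N$ is an immersion of a compact $n$-manifold with boundary, the induced map $q_f^{\ast} : H^{\ast}(W_f;A) \to H^{\ast}(M;A)$ is the relevant object to control: the key structural input is that $q_f$ is, away from a collar of $\partial W_f$, an $S^{m-n}$-bundle and, near $\partial W_f$, a $D^{m-n+1}$-bundle projection, so that $q_f$ is $(m-n)$-connected (it induces isomorphisms on $H^i$ for $i \le m-n-1$ and a surjection on $H^{m-n}$, in fact one has a Gysin-type or Leray–Hirsch-type description). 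Concretely I would show that every class $a \in H^{i}(M;A)$ with $i \le m-n$ can be written as $a = q_f^{\ast}(b) + (\text{a class coming from the fiber direction})$, but the essential point is cleaner: any product of such classes whose total degree is $\ge n$ must already be pulled back from $H^{\ast}(W_f;A)$ once the degrees involved force the "fiber" contributions to multiply to zero (the fiber $S^{m-n}$ contributes at most one generator in degree $m-n$, and two such would land in degree $2(m-n) > m-n \ge$ the degrees in play, hence square to zero).

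The core of the argument is then: a product $\prod_{j=1}^{l} a_j$ of total degree $\ge n$ lies in the image of $q_f^{\ast} : H^{\ast}(W_f;A) \to H^{\ast}(M;A)$ in degrees $\ge n$, but $W_f$ is an $n$-dimensional manifold with $\partial W_f \ne \emptyset$, hence $W_f$ is homotopy equivalent to a polyhedron of dimension $\le n-1$ (a compact $n$-manifold with boundary collapses onto such a spine), so $H^{k}(W_f;A) = 0$ for all $k \ge n$. Therefore the product vanishes. The two ingredients — (i) products of "low-degree" classes on $M$ are pulled back from $W_f$ once their total degree reaches $n$, and (ii) $H^{\ast}(W_f;A)$ vanishes above degree $n-1$ — combine to give the conclusion.

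I expect the main obstacle to be step (i): making precise the sense in which a product of classes of degree $\le m-n$ becomes a pullback from $W_f$ once the total degree is $\ge n$. The subtlety is that individual $a_j$ need not themselves be pullbacks; one must use the bundle structure of $q_f$ to see that $H^{\ast}(M;A)$ is generated, as an $H^{\ast}(W_f;A)$-module, by $1$ and a single class $\sigma$ of degree $m-n$ dual to the $S^{m-n}$ fiber (a Leray–Hirsch / Thom-isomorphism argument, valid because the relevant bundles are trivial away from and near the collar, which is exactly what Proposition \ref{prop:1} (\ref{prop:1.2})-style structure in part (\ref{prop:1.1}) supplies after checking orientability or working with the appropriate coefficients). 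Writing each $a_j = \alpha_j + \beta_j \sigma$ with $\alpha_j, \beta_j \in q_f^{\ast}H^{\ast}(W_f;A)$ and expanding the product, every term with two or more factors of $\sigma$ vanishes since $\sigma^2$ has degree $2(m-n)$ which exceeds $m-n$ hence cannot be hit nontrivially given the degree constraints (more carefully, $\sigma^2 = 0$ already by the fiberwise structure, or by a degree count against $\dim M = m$ combined with the module structure); and every term with at most one factor of $\sigma$ has its $H^{\ast}(W_f;A)$-part in degree $\ge n$ (when the surviving degree is split off the single $\sigma$ of degree $m-n$, the complementary part has degree $\ge n - (m-n) $, and one must track that carefully, but the upshot is that either the $W_f$-part is forced into degree $\ge n$ and dies, or there is no room for the product to be nonzero). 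Handling this bookkeeping cleanly, and dealing with the coefficient ring $A$ being merely commutative (no orientability assumption on $M$), is where the real work lies; everything else is formal.
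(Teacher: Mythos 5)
The paper does not actually prove Theorem \ref{thm:1}: it is quoted from \cite{kitazawa8} without proof, so your argument can only be assessed on its own terms. Your second ingredient is correct and is indeed the standard endgame: $\partial W_f\neq\emptyset$ (otherwise $f$ would be a submersion, forcing the compact set $f(M)$ to be all of the non-closed connected $N$), so $W_f$ collapses to an at most $(n-1)$-dimensional spine and $H^{k}(W_f;A)=0$ for every $k\geq n$. The gap is in your first ingredient, and it is fatal as stated. There is no class $\sigma\in H^{m-n}(M;A)$ ``dual to the $S^{m-n}$ fiber'': a regular fiber $F=q_f^{-1}(p)\cong S^{m-n}$ bounds the embedded disc $q_f^{-1}(\alpha)\cong D^{m-n+1}$, where $\alpha$ is an arc running from $p$ out to $\partial W_f$ (this is exactly the collar structure appearing in Proposition \ref{prop:1}), so $F$ is null-homologous in $M$ and no cohomology class of $M$ can restrict to a generator of $H^{m-n}(F;A)$. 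Consequently $H^{\ast}(M;A)$ is not generated over $q_f^{\ast}H^{\ast}(W_f;A)$ by $1$ and a single degree-$(m-n)$ class: for $M=S^{1}\times S^{m-1}\rightarrow{\mathbb{R}}^{n}$ with $W_f=S^{1}\times D^{n-1}$ and $2\leq n\leq m-1$, the required extra module generator sits in degree $m-1\neq m-n$. (Leray--Hirsch for the sphere-bundle part over the interior of $W_f$ would in any case require a vanishing Euler class, and $M$ is that bundle glued to a disc bundle over $\partial W_f$, not the bundle itself.) A small directional slip as well: an $(m-n)$-connected map gives a surjection on $H_{m-n}$ but an \emph{injection} on $H^{m-n}$.

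Even if one grants a decomposition $a_j=\alpha_j+\beta_j\sigma$, the bookkeeping does not close: a cross-term $q_f^{\ast}(\gamma)\cdot\sigma$ carrying exactly one $\sigma$-factor has $\deg\gamma=d-(m-n)\geq 2n-m$, which is in general far below $n$, so it is not killed by $H^{\geq n}(W_f;A)=0$; your phrase ``there is no room for the product to be nonzero'' is precisely the assertion that still needs proof. The genuine content of the theorem is the case where some factors have degree exactly $m-n$: everything of degree $\leq m-n-1$ is an honest pullback, since $q_f$ induces isomorphisms on (co)homology through degree $m-n-1$, and then $H^{\geq n}(W_f;A)=0$ finishes that case formally. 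Handling the top-degree factors requires a separate argument --- for instance via Poincar\'e--Lefschetz duality on $M$ and $W_f$, or via the null-homologous fiber --- and that is exactly the step your proposal leaves open.
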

\cite{kitazawa11} and \cite{kitazawa12} also present applications of this.
\begin{Rem}
\label{rem:2}
\cite{dranishnikovrudyak} and \cite{dranishnikovrudyak2} also present $7$-dimensional closed, simply-connected and smooth manifolds having non-vanishing triple Massey products. Their 3rd integral homology groups vanish. Their 2nd integral homology groups are free and the ranks are at least $4$. See also \cite{kitazawa7}. 
On the other hand, the 3rd integral homology group of $M$ in this new result or Main Theorem \ref{mainthm:1} vanishes. The 2nd integral homology group is free and the rank is $3$.
It is also known that the dimension of a closed, simply-connected and smooth manifold having non-vanishing triple Massey products must be greater than $6$: see these articles and see also \cite{fernandezMunoz} and \cite{miller} for example.
\end{Rem}
\section{A new class of special generic maps having simple structures and their images.}
If a closed and simply-connected manifold admits a special generic map $f$ whose codimension is negative into the line or the plane ${\mathbb{R}}^2$, then it is a homotopy sphere and $W_f$ in Proposition \ref{prop:1} is diffeomorphic to a closed interval or the $2$-dimensional unit disk. 
If a closed and simply-connected manifold admits a special generic map $f$ whose codimension is negative into ${\mathbb{R}}^3$, then they are diffeomorphic to a homotopy sphere or a manifold represented as a connected sum of total spaces of smooth bundles over $S^2$ whose fibers are homotopy spheres and $W_f$ is diffeomorphic to a manifold represented as a boundary connected sum of finitely many copies of $S^2 \times D^1$.
These studies are of \cite{saeki} and \cite{saeki2}. 
If a $5$-dimensional closed and simply-connected manifold admits a special generic map $f$ into ${\mathbb{R}}^4$, then it is a homotopy sphere or a manifold represented as a connected sum of total spaces of smooth bundles over $S^2$ whose fibers are diffeomorphic to $S^3$ where the connected sum is considered in the smooth category. On the other hand, such manifolds admit special generic maps into ${\mathbb{R}}^n$ for $n=3,4$. Furthermore, $W_f$ is a contractible $4$-dimensional manifold or a simply-connected manifold whose integral homology group and whose cohomology ring are isomorphic to those of a manifold represented as a boundary connected sum of finitely many copies of $S^2 \times D^2$ or $S^3 \times D^3$ where the boundary connected sum is considered in the smooth category. These studies on special generic maps on $5$-dimensional closed and simply-connected manifolds are by Nishioka \cite{nishioka}. We present a proposition of a smoothly immersed manifold of \cite{nishioka} again.

\begin{Prop}[\cite{nishioka}.]
\label{prop:2}
For a $4$-dimensional compact and simply-connected manifold $P$ smoothly immersed into ${\mathbb{R}}^4$, $H_2(P;\mathbb{Z})$ and $H_3(P;\mathbb{Z})$ are finitely generated and free.
\end{Prop}

Note also that \cite{barden} with \cite{smale} presents complete classifications of $5$-dimensional closed, simply-connected and topological manifolds in the topology, the PL, the piecewise smooth, and the smooth categories. Nishioka \cite{nishioka} applies Propositions \ref{prop:1} and \ref{prop:2}, investigate the homology groups of the $5$-dimensional closed and simply-connected manifolds of the domains of the resulting special generic maps and applies the classifications.

Our study of the present section is motivated by these facts on $W_f$ and Main Theorem \ref{mainthm:1}.
We can define an equivalence relation on the family of all smooth manifolds where two smooth manifolds are equivalent if and only if they are diffeomorphic or there exists a diffeomorphism between these two manifolds. For a smooth manifold $X$, we can uniquely define the class containing $X$. We call such a class a {\it diffeomorphism type} and the {\it diffeomorphism type} for $X$. We can define a similar equivalence relation on the family of all polyhedra where two polyhedra are equivalent if and only if they are PL homeomorphic or there exists a PL homeomorphism between these two polyhedra. For a polyhedron $X$, we can uniquely define the class containing $X$. We call such a class a {\it PL type} and the {\it PL type} for $X$. Unless otherwise stated, for a smooth manifold $X$, the {\it PL type} for $X$ is the PL type for the polyhedron $X$ compatible with $X$ as a smooth manifold. Such a polyhedron is known to exist and it is unique if we consider the PL type for the polyhedron. A {\it PL sphere} is a manifold regarded as a polyhedron compatible with a homotopy sphere which is not a $4$-dimensional homotopy sphere not being a standard sphere. The PL types for PL spheres of a fixed dimension is unique. This does not depend on the differentiable structures of the homotopy spheres.

For the PL category and the piecewise smooth category, which are known to be equivalent, there remain more to present. However we do not need to know them so much. We introduce some of them.

For example, PL maps, which are morphisms in the PL category, are regarded as piecewise smooth maps, which are morphisms in the piecewise smooth category. Smooth maps are regarded as piecewise smooth maps and PL maps when we regard the smooth manifolds as suitable polyhedra by considering the PL types. It is also known as a fundamental principle that a continuous map between polyhedra is approximated by a PL maps by a suitable homotopy.

${\mathcal{PL}}$ denotes the set of all PL types for all polyhedra. Let $\mathcal{M}_{\rm \mathcal{CS}Diff}$ be a set of several diffeomorphism types of  closed, connected and smooth manifolds. We consider a non-negative integer $l \in \{0\} \sqcup \mathbb{N}$ and a sequence $s_{\mathcal{M}_{\rm \mathcal{CS}Diff}}:\{0\} \sqcup {\mathbb{N}}_{\leq l} \rightarrow \mathcal{M}_{\rm \mathcal{CS}Diff} \times \{1\} \subset (\mathcal{M}_{\rm \mathcal{CS}Diff} \times \{1\}) \sqcup (\mathcal{PL} \times \{0\})$ where ${\mathbb{N}}_{\leq l_0}:=\{a \in \mathbb{N} \mid a \leq l_0\}$ for $l_0 \in \mathbb{R}$.  
We consider an iteration of the following three steps starting from $s_k:=s_{\mathcal{M}_{\rm \mathcal{CS}Diff}}$ with $k=0$.

\begin{enumerate}
\item
\label{s;1}
Take $s_k$ ($k \in \{0\} \sqcup {\mathbb{N}}_{\leq l}$). If $k=l$, then set $s_{l,0}:=s_l(0)$ and finish the present steps.
\item
\label{s:2}
Choose two distinct numbers $k_1,k_2 \in \{0\} \sqcup {\mathbb{N}}_{\leq l-k}$ satisfying $k_1<k_2$. Define $s_{k+1}:\{0\} \sqcup {\mathbb{N}}_{\leq l-k-1} \rightarrow (\mathcal{M}_{\rm \mathcal{CS}Diff} \times \{1\}) \sqcup (\mathcal{PL} \times \{0\})$ so that the following conditions hold on the set $\{0\} \sqcup ({\mathbb{N}}_{\leq l-k-1}-\{l-k-1\})$ and for $j=l-k-1 \in \{0\} \sqcup {\mathbb{N}}_{\leq l-k-1}$.
\begin{enumerate} 
\item
\label{s:2a}
For $0 \leq j < k_1-1$, $s_{k+1}(j):=s_k(j)$,
\item
\label{s:2b}
For $k_1-1 \leq j< k_2-1$, $s_{k+1}(j):=s_k(j+1)$.
\item
\label{s:2c}
For $k_2-1 \leq j <l-k-1$, $s_{k+1}(j):=s_k(j+2)$.
\item
\label{s:2d}
For $j=l-k-1$, either of the following three holds where ${\rm pr}_{\{0,1\}}$ denotes the projection to the second component.
\begin{enumerate}
\item
\label{s:2d1}
$s_{k+1}(j)$ is the pair of the PL type for a bouquet of a two polyhedra and $0$: the polyhedra are a polyhedron in 
the first component of $s_k(k_1)$ and one in the first component of $s_k(k_2)$, respectively. 
\item
\label{s:2d2}
$s_{k+1}(j)$ is the pair of the PL type for a product of a smooth manifold and a polyhedron and $0$ and $({\rm pr}_{\{0,1\}}(s_k(k_1)),{\rm pr}_{\{0,1\}}(s_k(k_2))) \neq (0,0)$: the polyhedra are a polyhedron or a smooth manifold in the first component of $s_k(k_1)$ and one in the first component of $s_k(k_2)$, respectively. 
\item
\label{s:2d3}
$s_{k+1}(j)$ is the pair of the diffeomorphism type for a smooth manifold represented as a connected sum of a smooth manifold and another smooth manifold and $1$ and $({\rm pr}_{\{0,1\}}(s_k(k_1)),{\rm pr}_{\{0,1\}}(s_k(k_2))) = (1,1)$: the smooth manifolds are a smooth manifold in the first component of $s_k(k_1)$ and one in the first component of $s_k(k_2)$, respectively. Moreover, the connected sum is considered in the smooth category.
\end{enumerate}
For each of the two manifolds or polyhedra to obtain a new manifold or a polyhedron, we can define a canonical embedding into the new space in the PL category for the first two cases. We call these embeddings {\it trace embeddings} and if the value ${\rm pr}_{\{0,1\}}(s_k(k_j))$ ($j=1,2$) is $1$, then we call this embedding a {\it special} trace embedding.
\end{enumerate}
\item
\label{s:3}
Return to the first step here by taking $s_{k+1}$ instead. 
\end{enumerate}
We consider a polyhedron the PL type for which is the first component of the value $s_{l,0}$. We call this polyhedron or a polyhedron obtained in this way an {\it elementary polyhedron generated by $\mathcal{M}_{\rm \mathcal{CS}Diff}$}. We call this sequence $s_{\mathcal{M}_{\rm \mathcal{CS}Diff}}$ used to obtain the polyhedron a {\it root} of the polyhedron. We also call the image of the composition of a root of the polyhedron with the projection to the first component a {\it root}. We say that the sequence of the pairs of trace embeddings defined in each step of the first two types in (\ref{s:2d}) is {\it associated with} the polyhedron: the length is same as the time of steps of the first two types in (\ref{s:2d}).
\begin{Def}
\label{def:1}
Let $n \geq 1$ be an integer. An $n$-dimensional compact, connected and smooth manifold $X$ smoothly immersed into ${\mathbb{R}}^n$ is said to be a {\it smoothly immersed elementary manifold of $(K,{\mathcal{M}}_{\rm \mathcal{CS}Diff})$} or {\it SIE-$(K,{\mathcal{M}}_{\rm \mathcal{CS}Diff})$} if the following properties hold.
\begin{enumerate}
\item $X$ collapses to an elementary polyhedron $K$ generated by ${\mathcal{M}}_{\rm \mathcal{CS}Diff}$ in the PL category.
\item Any special embedding in any pair of a suitable sequence of trace embeddings associated with $K$ is smooth as an embedding into $X$.
\item If for $K$ and a suitable procedure to obtain this before, the second component of the value $s_{l,0}$ is $1$, then we also assume the previous embedding of the smooth manifold $K$ into $X$ to be a smooth embedding.
\end{enumerate}
If ''immersed'' is replaced by ''embedded'', then it is said to be a {\it smoothly embedded elementary manifold of $(K,{\mathcal{M}}_{\rm \mathcal{CS}Diff})$} or {\it SEE-$(K,{\mathcal{M}}_{\rm \mathcal{CS}Diff})$}.
Furthermore, in Proposition \ref{prop:1}, if $\bar{f}$ is an immersion and $W_f$ is an SIE-$(K,{\mathcal{M}}_{\rm \mathcal{CS}Diff})$ (an embedding and $W_f$ is an SEE-$(K,{\mathcal{M}}_{\rm \mathcal{CS}Diff})$), then the special generic map $f$ is said to be an {\it SIE-$(K,{\mathcal{M}}_{\rm \mathcal{CS}Diff})$} (resp. {\it SEE-$(K,{\mathcal{M}}_{\rm \mathcal{CS}Diff})$}).
\end{Def}
In Theorem 6 of \cite{kitazawa9}, we can construct explicit special generic maps satisfying definitions in Definition \ref{def:1} for example. The $n$-dimensional compact manifolds are regarded as so-called {\it regular neighborhoods} of $K$ in the smooth category. \cite{hirsch} gives a rigorous definition of this for example.

The following is Main Theorem \ref{mainthm:2}.
\begin{Thm}
\label{thm:2}
Let $n \geq k \geq 2$ be integers.
\begin{enumerate}
\item \label{thm:2.1}
For any $n$-dimensional compact, {\rm (}$k-1${\rm )}-connected and smooth manifold $X$ smoothly immersed {\rm (}embedded{\rm )} into ${\mathbb{R}}^n$ which is an SIE-$(K,{\mathcal{M}}_{\rm \mathcal{CS}Diff})$ {\rm (}resp. SEE-$(K,{\mathcal{M}}_{\rm \mathcal{CS}Diff})${\rm )}, any {\rm root} of the polyhedron $K$ $X$ collapses to consists of diffeomorphism types for {\rm(}$k-1${\rm )}-connected manifolds.  
\item \label{thm:2.2}
In {\rm (\ref{thm:2.1})}, suppose that $n \leq 3k$ and that the set ${\mathcal{M}}_{\rm \mathcal{CS}Diff}$ of diffeomorphism types is sufficiently large. Then we can choose a root $R_K \subset {\mathcal{M}}_{\rm \mathcal{CS}Diff}$ of $K$ and $K$ is obtained by a finite iteration of taking a bouquet starting from finitely many polyhedra satisfying either of the following two.
\begin{enumerate}
\item The product of a homotopy sphere in a diffeomorphism type in $R_K$ and a polyhedron obtained by a finite iteration of taking a bouquet starting from at least $2$ homotopy spheres in diffeomorphism types in $R_K$. In this case we can choose $R_K$ as a root consisting of diffeomorphism types for homotopy spheres.
\item A manifold represented as a connected sum of finitely many closed and {\rm (}$k-1${\rm )}-connected smooth manifolds in $R_K$ where the connected sum is taken in the smooth category.
\end{enumerate}  
\item \label{thm:2.3}
Suppose that a set ${\mathcal{M}}_{\rm \mathcal{CS}Diff}$ of diffeomorphism types is sufficiently large. Let $K$ be an elementary polyhedron whose root can be a subset of ${\mathcal{M}}_{\rm \mathcal{CS}Diff}$. 
\begin{enumerate}
\item
\label{thm:2.3.1}
Assume also that $K$ is PL homeomorphic to one obtained by a finite iteration of taking a bouquet starting from finitely many polyhedra satisfying at least one of the following two conditions.
\begin{enumerate}
\item
\label{thm:2.3.1.1}
The product of a closed and {\rm (}$k-1${\rm )}-connected manifold $Y$ in diffeomorphism types in $R_K$ we can smoothly immerse {\rm (}resp. embed{\rm )} into ${\mathbb{R}}^{\dim Y+1}$ and a polyhedron obtained by a finite iteration of taking a bouquet starting from finitely many closed, {\rm(}$k-1${\rm )}-connected and smooth manifolds which can be smoothly embedded into one-dimensional higher Euclidean spaces where we choose a suitable root $R_K \subset {\mathcal{M}}_{\rm \mathcal{CS}Diff}$.
\item
\label{thm:2.3.1.2}
A closed and {\rm (}$k-1${\rm )}-connected manifold in diffeomorphism types in $R_K$ we can smoothly immerse {\rm (}resp. embed{\rm )} into ${\mathbb{R}}^n$ where we choose a suitable root $R_K \subset {\mathcal{M}}_{\rm \mathcal{CS}Diff}$.
\end{enumerate}
\item
\label{thm:2.3.2}
Assume also that $\dim K<n$ holds.
\end{enumerate}
Then there exists an $n$-dimensional compact, {\rm (}$k-1${\rm )}-connected and smooth manifold $X$ smoothly immersed {\rm (}embedded{\rm )} into ${\mathbb{R}}^n$ which is an SIE-$(K,{\mathcal{M}}_{\rm \mathcal{CS}Diff})$ {\rm (}resp. SEE-$(K,{\mathcal{M}}_{\rm \mathcal{CS}Diff})${\rm )}.
\end{enumerate}
\end{Thm}

\begin{proof}
We prove (\ref{thm:2.1}). By the definition, $K$ is obtained by a finite iteration of taking a bouquet, a product, or a connected sum starting from smooth manifolds in diffeomorphisms in the root.
By virtue of the definitions of notions on these polyhedra and fundamental theorems on algebraic topology for example, we immediately have (\ref{thm:2.1}). 

We prove (\ref{thm:2.2}).
Let $K$ be a polyhedron which is not PL homeomorphic to any bouquet of two polyhedra which are not one-point sets there. In other words, in this case we never use an operation of taking a bouquet in defining $s_l$ and $s_{l,0}$ in the presented procedures for obtaining $K$.

Note also that $\dim K <n \leq 3k$ holds by the assumption. By the definitions of notions on polyhedra related to this scene, for $K$ we can argue as either of the following two cases.\\
 \\
Case A. \\
$K$ is represented as a connected sum of closed and ($k-1$)-connected manifolds where the connected sum is taken in the smooth category. We never use an operation of taking a product in defining $s_l$ and $s_{l,0}$ in the presented procedures for obtaining $K$. By the assumption on the piecewise, this must be a homotopy sphere of dimension at least $k$ and at most $2k-1$ or a closed and ($k-1$)-connected manifold dimension at least $2k$ and at most $3k-1$. In the latter case, this must be either of the following two. Furthermore, $R_K \subset {\mathcal{M}}_{\rm \mathcal{CS}Diff}$ is chosen suitably.
\begin{itemize}
\item This is a homotopy sphere and represented as a connected sum of homotopy spheres in diffeomorphism types in $R_K$ where the connected sum is taken in the smooth category.
\item This is represented as a connected sum of finitely many closed and {\rm (}$k-1${\rm )}-connected smooth manifolds in $R_K$ containing at least one manifold which is not a homotopy sphere. Of course the connected sum is taken in the smooth category.
\end{itemize}
Case B. \\
$K$ is a product of the following two spaces each of which is a smooth manifold or merely a polyhedron where we choose a root $R_K \subset {\mathcal{M}}_{\rm \mathcal{CS}Diff}$ suitably. Moreover, their dimensions are at least $k$ by the assumption and we use an operation of taking a product in defining $s_l$ and $s_{l,0}$ in the presented procedures for obtaining $K$.
\begin{itemize}
\item One is a closed and smooth manifold whose dimension is at most $2k-1$. This must be ($k-1$)-connected by the assumption on the connectivity and as a result a homotopy sphere in $R_K$. 
\item The other is a polyhedron, whose dimension is at least $k$ and at most $2k-1$. By the conditions on the dimensions and the connectivity, this must be a bouquet of homotopy spheres in $R_K$. The number of homotopy spheres used in the bouquet is greater than or equal to $2$ in general.
\end{itemize}
This yields the fact (\ref{thm:2.2}). 

We prove (\ref{thm:2.3}). 
We explain about a polyhedron of (\ref{thm:2.3.1.1}), the product of the former manifold and a copy of the ($n-\dim Y$)-dimensional unit disk can be, by the definition, smoothly immersed or embedded into ${\mathbb{R}}^n$. The latter polyhedron can be embedded into the interior of a copy of $D^{n-\dim Y}$ as a subpolyhedron. As a result, we can take a desired smooth manifold $X$ as a suitable regular neighborhood of the product of a suitable two polyhedra regarded as a subpolyhedron in ${\mathbb{R}}^n$.
For a closed manifold of (\ref{thm:2.3.1.2}) represented by the connected sum, we can take a suitable smooth manifold $X$ by the definition.
For a general case, we can take a boundary connected sum of such manifolds and we have a desired manifold $X$ where the boundary connected sum is taken in the smooth category.

\end{proof}

\begin{Def}
\label{def:2}
An $n$-dimensional compact, connected and smooth manifold $X$ smoothly immersed into ${\mathbb{R}}^n$
is said to be {\it essentially an SIE-$(K,{\mathcal{M}}_{\rm \mathcal{CS}Diff})$} if it is obtained by restricting an {\it SIE-$(K,{\mathcal{M}}_{\rm \mathcal{CS}Diff})$} $X_0$ to the complementary set of
the interior of the disjoint union of finitely many disjoint small regular neighborhoods of elementary polyhedra in ${\rm Int}\ X_0$ generated by $\mathcal{M}_{\rm \mathcal{CS}Diff}$ where regular neighborhoods are considered in the smooth category. 
Furtheremore, we define the following notions.
\begin{enumerate}
\item We call the elementary polyhedra in ${\rm Int}\ X_0$ {\it holes} for the original SIE-$(K,{\mathcal{M}}_{\rm \mathcal{CS}Diff})$.
\item We call the original immersed manifold $X_0$ the {\it host} for the immersed manifold which is essentially an SIE-$(K,{\mathcal{M}}_{\rm \mathcal{CS}Diff})$.
\item As Definition \ref{def:1}, we define similar notions by adding ''{\it essentially}'' in the beginning.
Moreover, if holes are always bouquets of homotopy spheres or one-point sets, then we add ''{\it very essentially}'' instead of ''essentially''.
\end{enumerate}
\end{Def}
We present another theorem as Theorem \ref{thm:3}. Some of the theorem are very fundamental. Some of them are or may be essentially equivalent to known results or corollaries to them in \cite{kitazawa0.1}--\cite{kitazawa2}, \cite{kitazawa}, \cite{kitazawa2}--\cite{kitazawa7}. Remark \ref{rem:3} also gives related expositions. 
For example, (\ref{thm:3.3}) may be also shown via theory in these studies and related studies. More precisely, \cite{kitazawa1.1}--\cite{kitazawa1.3} and \cite{kitazawa10} may also help. The author expects that some methods of construction of special generic maps into higher dimensional Euclidean spaces from given fold maps such that preimages of regular values are disjoint unions of standard spheres and that are represented as the compositions of the resulting special generic maps with canonical projections.
For a real number $r$, $[r]$ denotes the greatest integer satisfying $[r] \leq r$. A {\it PID} means a so-called principal ideal domain having a unique identity element different from the zero element. 
For a polyhedron, if a homology class is equal to the class realized as the value of the homomorphism induced by a PL embedding of a closed, connected and orientable PL manifold at a {\it fundamental class}, then we say that the class is {\it represented by the manifold}. A {\it fundamental class} of a closed, connected and orientable PL manifold is a homology class generating the homology group of the top homology group of the manifold and compatible with a suitable orientation. 
\begin{Thm}
\label{thm:3}
Let $n \geq k \geq 2$ be integers. Let $K$ be a one-point set. Let $A$ be a PID. Let $\{G_j\}_{j=1}^{n}$ be a sequence of finitely generated free modules of length $n$ over $A$ such that $G_j$ is trivial for $1 \leq j \leq k-1$ and $j=n$ and that the inequality ${\rm rank}\ G_{n-1}>0$ holds.
Assume also that the set ${\mathcal{M}}_{\rm \mathcal{CS}Diff}$ is sufficiently large. We have the following five.

\begin{enumerate}
\item
\label{thm:3.1}
 Let an $n$-dimensional compact, connected and smooth manifold $X$ smoothly immersed into ${\mathbb{R}}^n$ be very essentially an SIE-$(K,{\mathcal{M}}_{\rm \mathcal{CS}Diff})$ and {\rm (}$k-1${\rm )}-connected. Then the holes are bouquets of PL spheres whose dimensions are at most $n-k-1$ or one-point sets and the host is diffeomorphic to the unit disk of dimension $n$.  
\item
\label{thm:3.2}
 For a suitable disjoint union of bouquets of PL spheres, which is denoted by $K_0$, we can obtain an $n$-dimensional compact, connected and smooth manifold $X$ smoothly embedded into ${\mathbb{R}}^n$ which is very essentially an SEE-$(K,{\mathcal{M}}_{\rm \mathcal{CS}Diff})$ and {\rm (}$k-1${\rm )}-connected satisfying the following properties. 
\begin{enumerate}
\item The disjoint union of all holes are PL homeomorphic to $K_0$.
\item $H_j(X;A)$ and $H^j(X;A)$ are isomorphic to $G_j$
\end{enumerate}
\item
\label{thm:3.3}
Suppose ${\Sigma}_{j=1}^{n-2} {\rm rank}\ G_j \leq {\rm rank}\ G_{n-1}$.
Let $\{a_{j}\}_{j=1}^{l}$ be a sequence of integers of length $l:={\Sigma}_{j=1}^{[\frac{n-1}{2}]} ({\rm rank}\ G_{j}\ {\rm rank}\ G_{n-1-j})>0$.

For a suitable disjoint union of PL spheres or one-point sets, which is denoted by $K_0$, we can obtain an $n$-dimensional compact, connected and smooth manifold $X$ smoothly embedded into ${\mathbb{R}}^n$ which is very essentially an SEE-$(K,{\mathcal{M}}_{\rm \mathcal{CS}Diff})$ and {\rm (}$k-1${\rm )}-connected satisfying the following properties. 
\begin{enumerate}
\item
\label{thm:3.3.1}
The disjoint union of all holes is PL homeomorphic to $K_0$.
\item
\label{thm:3.3.2}
$H_j(X;A)$ and $H^j(X;A)$ are isomorphic to $G_j$.
\item
\label{thm:3.3.3}
The following two hold for a suitable basis $\{e_{j_1,j_2}\}_{j_2=1}^{{\rm rank}\ G_{j_1}}$ of $H^{j_1}(X;A)$ for $1 \leq j_1 \leq n-1$.
\begin{enumerate}
\item
\label{thm:3.3.3.1}
The cup product of $e_{a_1,a_2}$ and $e_{b_1,b_2}$ is $$a_{{\Sigma}_{j=1}^{a_1-1} {\rm rank}\ G_{j}\ {\rm rank}\ G_{n-1-j}+{\rm rank}\ (G_{a_1})(b_2-1)+a_2}(e_{n-1,{\Sigma}_{j=1}^{a_1-1} {\rm rank}\ (G_{j})+a_2}+e_{n-1,{\Sigma}_{j=1}^{b_1-1} {\rm rank}\ (G_{j})+b_2}) \in H^{n-1}(X;A)$$ in the case $a_1<b_1$ and $a_1+b_1=n-1$ and zero in the case $a_1>0$, $b_1>0$ and $a_1+b_1 \neq n-1$. 
\item
\label{thm:3.3.3.2}
Suppose that $n$ is odd. Suppose that the sequence 
$\{a_{0,j}\}_{j=1}^{\frac{({\rm rank}\ G_{(\frac{n-1}{2})}-1){\rm rank}\ G_{(\frac{n-1}{2})}}{2}}$ of integers of length $l_0:=\frac{({\rm rank}\ G_{(\frac{n-1}{2})}-1){\rm rank}\ G_{(\frac{n-1}{2})}}{2}$ is given. The cup product of $e_{\frac{n-1}{2},a}$ and $e_{\frac{n-1}{2},b}$ is $$a_{0,{\Sigma}_{j=1}^{a-1} (l_0-j)+(b-a)}(e_{n-1,{\Sigma}_{j=1}^{\frac{n-1}{2}-1} {\rm rank}\ (G_{j})+a}+e_{n-1,{\Sigma}_{j=1}^{\frac{n-1}{2}-1} {\rm rank}\ (G_{j})+b}) \in H^{n-1}(X;A)$$ for $a<b$ and zero for $a=b$. 
\end{enumerate}
\end{enumerate}
\item
\label{thm:3.4}
In the previous statement, conversely, for any $n$-dimensional compact, connected and smooth manifold $X$ smoothly immersed into ${\mathbb{R}}^n$ which is very essentially an SIE-$(K,{\mathcal{M}}_{\rm \mathcal{CS}Diff})$ and {\rm (}$k-1${\rm )}-connected and whose holes are always homotopy spheres, the cohomology ring whose coefficient ring is $A$ is isomorphic to one obtained in this presented way.
\item
\label{thm:3.5}
Let an $n$-dimensional compact, connected and smooth manifold $X$ smoothly immersed into ${\mathbb{R}}^n$ be very essentially an SIE-$(K,{\mathcal{M}}_{\rm \mathcal{CS}Diff})$ and {\rm (}$k-1${\rm )}-connected. If $n<3k$, then for
any $n$-dimensional compact, connected and smooth manifold $X$ smoothly immersed into ${\mathbb{R}}^n$ which is very essentially an SIE-$(K,{\mathcal{M}}_{\rm \mathcal{CS}Diff})$ and {\rm (}$k-1${\rm )}-connected, any triple Massey product vanishes.
\end{enumerate}
\end{Thm}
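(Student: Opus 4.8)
The plan is to exploit that, since $K$ is a one-point set, the host in Definition \ref{def:2} collapses to a point and is therefore diffeomorphic to the standard disc $D^n$; hence every manifold $X$ occurring in the statement is $D^n$ with the interiors of finitely many disjoint, tamely smoothly embedded regular neighborhoods of bouquets of PL spheres (the holes) removed. Every homological assertion then reduces to Alexander--Lefschetz duality and every assertion about the cohomology ring to the elementary theory of linking numbers of links of spheres; since most of these facts are already available in \cite{kitazawa0.1}--\cite{kitazawa7}, the task is largely to organize them in the present language, the only genuinely self-contained point being the proof of (\ref{thm:3.5}).

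For (\ref{thm:3.1}) I would embed $D^n$ in $S^n$, cap the outer boundary sphere $S^{n-1}\subset\partial X$ by a disc, and run Mayer--Vietoris to identify the result with $S^n\setminus P$, where $P\subset S^n$ is the disjoint union of the cores of the holes. This gives $\tilde H_j(X)\cong\tilde H^{\,n-1-j}(P)$ for $1\le j\le n-2$ and $H_{n-1}(X)$ free of rank the number of holes, all groups free since $P$ is a disjoint union of bouquets of spheres. As $X$ is $(k-1)$-connected these groups vanish for $j\le k-1$, which forces every sphere occurring in a hole to have dimension at most $n-k-1$; the host being $D^n$ was already noted. Reading this computation backwards proves (\ref{thm:3.2}): one drills out of $D^n$ an unknotted, unlinked disjoint union consisting of ${\rm rank}\,G_{n-1}$ bouquets and, for each $1\le d\le n-k-1$, exactly ${\rm rank}\,G_{n-1-d}$ spheres of dimension $d$, distributed so that no bouquet is empty, which is possible exactly because ${\Sigma}_{j=1}^{n-2}{\rm rank}\,G_j\ge{\rm rank}\,G_{n-1}$; universal coefficients over the PID $A$ then passes between homology and cohomology. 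The only delicate point is $(k-1)$-connectedness: the homology is handled by the duality computation and $\pi_1$ vanishes because the spheres sit in codimension $k+1\ge 3$.

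Under the hypothesis ${\Sigma}_{j=1}^{n-2}{\rm rank}\,G_j={\rm rank}\,G_{n-1}$ of (\ref{thm:3.3})--(\ref{thm:3.4}) every hole is a single PL sphere, so $X$ is the exterior of a link $\bigsqcup_i S^{d_i}\subset D^n$. The cup product $H^{a_1}(X)\otimes H^{b_1}(X)\to H^{a_1+b_1}(X)$ is governed by the pairwise linking numbers of the components: by the range established in (\ref{thm:3.1}) the target vanishes unless $a_1+b_1=n-1$, and in that case the product of the classes dual to $S^{d_i}$ and $S^{d_{i'}}$ is the linking number of that pair, recorded symmetrically against the two top-degree generators dual to those two spheres, which is the source of the $(e_{n-1,\ast}+e_{n-1,\ast})$ shape of the formulas (the odd-$n$ clause being the self-pairing within the middle degree). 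To realize a prescribed system $\{a_i\}$ I would start from the unlink and insert Hopf-type clasps between the relevant pairs, each clasp altering one linking number by $\pm 1$ while changing neither the homology nor the simple-connectedness; the indices in (\ref{thm:3.3.3.1})--(\ref{thm:3.3.3.2}) merely linearly order the pairs $(j_1,j_2)$ with $j_1<j_2$ and $j_1+j_2=n-1$. The converse (\ref{thm:3.4}) is the remark that the cohomology ring of a sphere-link exterior depends only on the list of component dimensions and on the linking matrix, higher-order linking being invisible to cup products, so every such $X$ carries a ring on the list. I expect the bookkeeping of these indices, together with the verification that no knotting phenomena interfere with the cup-product computation, to be the most error-prone part, though it is not conceptually difficult.

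Finally, (\ref{thm:3.5}) is a degree count. A triple Massey product $\langle a,b,c\rangle$ with $a\in H^p(X)$, $b\in H^q(X)$, $c\in H^r(X)$ can be non-trivial only when $p,q,r\ge k$, because $X$ is $(k-1)$-connected and $H^{\ast}(X;A)$ is free, so any degree-$0$ factor forces the product to be zero; and it takes values in a subquotient of $H^{p+q+r-1}(X)$, which vanishes once $p+q+r-1\ge n$ since $H^j(X;A)=0$ for $j\ge n$ by the homology computation of (\ref{thm:3.1}). When $n<3k$ we have $p+q+r-1\ge 3k-1\ge n$, so this target is always zero and every triple Massey product vanishes. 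The bound is sharp: the Borromean configuration behind Main Theorem \ref{mainthm:1} has $n=6$ and $k=2$, so $n=3k$.
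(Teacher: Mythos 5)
Your proposal is correct and follows essentially the same route as the paper: the host collapses to a point and is the standard disc, the homology of $X$ and the dimension bound $n-k-1$ on the spheres in the holes come from the Mayer--Vietoris/duality computation for the complement of the regular neighborhood of the holes, the realization statements are obtained by drilling out unknotted and suitably linked spheres whose linking data encode the cup products, and the Massey-product claim is the degree count $p+q+r-1\geq 3k-1\geq n$. Your write-up is in fact more explicit than the paper's on the last two points (the paper only gestures at ``fundamental properties of triple Massey products''), but there is no substantive difference in method.
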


\begin{proof}
We prove (\ref{thm:3.1}).
Let $D_0$ denote the host. This collapses to a one-point set $K$ by the assumption and as a result, diffeomorphic to the $n$-dimensional unit disk. 
Let $K_0$ be the disjoint union of all holes and $N(K_0)$ denote a small regular neighborhood. We have the following exact sequence for the PID $A$: 
$$\begin{CD}
@>   >> H_i(\partial N(K_0);A) @> >> H_i(N(K_0);A) \oplus H_i(D_0-{\rm Int} N(K_0);A)
\end{CD}$$

$$\begin{CD}
@>   >>  H_i(D_0;A) @>    >> H_{i-1}(\partial N(K_0);A) 
\end{CD}$$

$$\begin{CD}
@>   >> H_{i-1}(N(K_0);A) \oplus H_{i-1}(D_0-{\rm Int} N(K_0);A) @>   >>
\end{CD}.$$

$D_0$ is diffeomorphic to a unit disk. We can also regard $N(K_0)$ as a manifold represented as a boundary connected sum of finitely many copies of manifolds diffeomorphic to $S^{n-j} \times D^{j}$ for $n-j>0$ or $D^j$ for $n=j$ where the connected sum is taken in the smooth category. 
The sequence yields $j \geq k+1$. As a result holes are regarded as bouquets of PL spheres whose dimensions are at most $n-k-1$ or one-point sets.

We prove (\ref{thm:3.2}). First we consider the case ${\Sigma}_{j=1}^{n-2}\ {\rm rank}\ G_j \leq {\rm rank}\ G_{n-1}$. We consider a suitable disjoint union of PL spheres consisting of exactly ${\rm rank}\ G_j$ ($n-j-1$)-dimensional PL spheres for $1 \leq j < n-1$ and ${\rm rank}\ G_{n-1}-{\Sigma}_{j=1}^{n-2}\ {\rm rank}\ G_j$ one-point sets.
We can take the holes diffeomorphic to them. This completes the proof for the case ${\Sigma}_{j=1}^{n-2}\ {\rm rank}\ G_j \leq {\rm rank}\ G_{n-1}$. For a general case, we choose $l_{n-1}>0$ of these spheres or one-point sets and consider a bouquet instead of the family of $l_{n-1}$ manifolds consisting of spheres or one-point sets. This completes the proof for any case ${\rm rank}\ G_{n-1}>0$. For each hole, take a small regular neighborhood and its boundary. The homology classes represented  by these boundaries form a basis of $H_{n-1} (D_0-{\rm Int} N(K_0);A)$. 

We prove (\ref{thm:3.3}). We prove this based on fundamental methods of calculating homology groups and cohomology rings.
We consider a suitable disjoint union of PL spheres consisting of exactly ${\rm rank}\ G_j$ ($n-j-1$)-dimensional PL sphere for $1 \leq j<n-1$ and ${\rm rank}\ G_{n-1}-{\Sigma}_{j=1}^{n-2}\ {\rm rank}\ G_j$ one-point sets.  
We can embed this smoothly in the interior of a manifold $D_0$ before, diffeomorphic to the $n$-dimensional unit disk and smoothly embedded in ${\mathbb{R}}^n$.

We first smoothly embed the disjoint union of the PL spheres whose dimensions are greater than $[\frac{n-1}{2}]$. For such an arbitrary embedding we find a suitable smooth embedding of the disjoint union of the PL spheres whose dimensions are smaller than $[\frac{n-1}{2}]$. More precisely, we can embed the disjoint union smoothly so that 
for the $a_2$-th ($n-a_1-1$)-dimensional PL sphere $K_1$ there and the $b_2$-th ($n-b_1-1$)-dimensional PL sphere $K_2$ there, the following properties hold under the conditions $a_1<b_1$ and $a_1+b_1=n-1$.  
Facts on homology groups are essentially (\ref{thm:3.2}) and here we concentrate on cohomology groups and rings.

\begin{itemize}
\item For a small regular neighborhood $N(K_1)$ of $K_1$, it is regarded as a closed tubular neighborhood and a trivial linear bundle whose fiber is diffeomorphic to the ($a_1+1$)-dimensional unit disk.
\item For a small regular neighborhood $N(K_2)$ of $K_2$, it is regarded as a closed tubular neighborhood and a trivial linear bundle whose fiber is diffeomorphic to the ($b_1+1$)-dimensional unit disk.
\item For a fiber of the bundle $\partial N(K_1)$ obtained as a subbundle of the bundle before, we can take a homology class $e_{{\rm h},a_1,a_2}$ of degree $a_1=n-1-b_1$ represented by the fiber. 
For a fiber of the bundle $\partial N(K_2)$ obtained as a subbundle of the bundle before, we can take a homology class $e_{{\rm h},b_1,b_2}$ of degree $b_1=n-1-a_1$ represented by the fiber. 

\end{itemize}

Moreover, for each fixed pair $(b_1,b_2)$ and each $K_2$, we can construct the embedding so that $${\Sigma}_{a_2=1}^{{\rm rank}\ G_{a_1}} a_{{\Sigma}_{j=1}^{a_1-1}\ ({\rm rank}\ G_{j} {\rm rank}\ G_{n-1-j})+{\rm rank}\ (G_{a_1})(b_2-1)+a_2}{e_{{\rm h},a_1,a_2}}$$ is represented by $K_2$.

We can take a desired cohomology class $e_{a_1,a_2}$ and desired bases satisfying the following properties by the topological properties of the manifolds. $1 \in A$ denotes the unique identity element and it is different from the zero element $0 \in A$.
\begin{itemize}
\item $e_{a_1,a_2}(e_{{\rm h},a_1,a_2})=1$.
\item $e_{a_1,a_2}(e_{{\rm h},a_1,\tilde{a_{2}}})=0$ for $\tilde{a_{2}} \neq a_2$.
\end{itemize}
We can take a desired cohomology class $e_{b_1,b_2}$ and desired bases similarly.
For $a_j$ and $b_j$ with $j=1,2$ as before, we can take a homology class $$e_{{\rm h},n-1,{\Sigma}_{j=1}^{a_1-1} {\rm rank}\ G_{j}+a_2}$$ represented by the boundary $\partial N(K_1)$ and a homology class
$$e_{{\rm h},n-1,{\Sigma}_{j=1}^{b_1-1} {\rm rank}\ G_{j}+b_2}$$ represented by the boundary $\partial N(K_2)$.
We can take a desired basis $\{e_{n-1,j_2}\}$ of $H^{n-1}(X;A)$ in a way simiar to the way before. For a suitable one we have that the cup product of $e_{a_1,a_2}$ and $e_{b_1,b_2}$ is

$$a_{{\Sigma}_{j=1}^{a_1-1} {\rm rank}\ G_{j}\ {\rm rank}\ G_{n-1-j}+{\rm rank}\ (G_{a_1})(b_2-1)+a_2}(e_{n-1,{\Sigma}_{j=1}^{a_1-1} {\rm rank}\ (G_{j})+a_2}+e_{n-1,{\Sigma}_{j=1}^{b_1-1} {\rm rank}\ (G_{j})+b_2}) \in H^{n-1}(X;A)$$
by the topological properties of the manifolds.

In the case $n$ is odd, in addition we can also embed ($\frac{n-1}{2}$)-dimensional PL spheres satisfying similar properties and respecting the additional condition. More precisely, $a$ and $b$ satisfying $a<b$ of (\ref{thm:3.3.3.2}) correspond to the $a$-th and $b$-th ($\frac{n-1}{2}$)-dimensional PL spheres, respectively.

(\ref{thm:3.4}) is also one of new ingredients in this theorem. In any case, the disjoint union of all of the holes are obtained by the following as in the proof of (\ref{thm:3.3}). We first smoothly embed the disjoint union of the PL spheres whose dimensions are greater than $[\frac{n-1}{2}]$. After that we smoothly embed the disjoint union of the PL spheres whose dimensions are smaller than $[\frac{n-1}{2}]$. 

We also refer to the proof of (\ref{thm:3.3}) in completing our proof of (\ref{thm:3.4}).
For the homology groups and cohomology groups of the manifold which is very essentially an SIE-$(K,{\mathcal{M}}_{\rm \mathcal{CS}Diff})$ and {\rm (}$k-1${\rm )}-connected and whose holes are always homotopy spheres, we have similar (co)homology classes and properties taking suitable coefficients as the coefficients at the classes represented by the fibers of the bundles or the closed tubular neighborhoods. This completes the proof of (\ref{thm:3.4}).  

We can show (\ref{thm:3.5}) immediately by the assumptions that $n<3k$ and that the manifold is ($k-1$)-connected together with the definition and some fundamental properties of triple Massey products.
\end{proof}
Last, we present two more explicit cases as theorems. Hereafter, connected sums of smooth manifolds are considered in the smooth category.
\begin{Thm}
\label{thm:4}
In the situation of Theorem \ref{thm:2}, let $(n,k)=(5,2)$. If a root contains a diffeomorphism type for a manifold which is not diffeomorphic to any homotopy sphere, then it is for a manifold represented as a connected sum of finitely many copies of $S^2 \times S^2$. 

Moreover. let $(n,k)=(6,2)$ and consider only ''SEE''.  If the root contains a diffeomorphism type for a manifold which is not diffeomorphic to any homotopy sphere, then it is for a manifold represented as a connected sum of finitely many copies of $S^2 \times S^2$ or a $5$-dimensional closed, simply-connected and spin manifold in \cite{barden} {\rm (}\cite{smale}{\rm )}.
\end{Thm}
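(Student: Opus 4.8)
The plan is to feed Theorem \ref{thm:2} into the structure theory of low-dimensional simply-connected manifolds, using classical immersion and embedding theory to pin down the possible root types and then giving explicit constructions for the realizability clause. First I would record the output of Theorem \ref{thm:2} with $k=2$: every diffeomorphism type in the root is that of a closed, connected, simply-connected, smooth manifold which smoothly immerses (resp.\ embeds) into ${\mathbb{R}}^n$, and one that is not a homotopy sphere has dimension $\geq 2k=4$. Since no closed $n$-manifold immerses into ${\mathbb{R}}^n$, a non-sphere type has dimension $\geq 4$ and $\leq n-1$; hence for $(n,k)=(5,2)$ it is exactly $4$-dimensional, while for $(n,k)=(6,2)$ it is $4$- or $5$-dimensional. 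Moreover, whenever such a type occurs it is introduced through a (special) trace embedding, so the underlying manifold embeds smoothly into the $n$-dimensional ambient manifold $X$, hence immerses (resp.\ embeds) into ${\mathbb{R}}^n$.

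Next I would handle the $4$-dimensional case. By the Hirsch immersion theorem, a closed orientable $4$-manifold $M$ immerses into ${\mathbb{R}}^5$ if and only if its stable tangent bundle is trivial, which for simply-connected $M$ is equivalent to $w_2(M)=0$ and $p_1(M)=0$, i.e.\ to $M$ being spin with vanishing signature (using $p_1=3\sigma$). The classification of closed simply-connected spin $4$-manifolds then shows that $M$ has the homology groups and the intersection form of a connected sum of copies of $S^2\times S^2$, and I would phrase the conclusion accordingly, upgrading it to a genuine diffeomorphism statement precisely for the manifolds realized below. Since the connected-sum operation is one of the elementary steps in building $K$ from its root, a single copy of $S^2\times S^2$ already generates all the connected sums $\#_r(S^2\times S^2)$, which gives the parenthetical economy remark. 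For $(n,k)=(6,2)$ with ``SEE'' one must instead understand which simply-connected $4$-manifolds embed in ${\mathbb{R}}^6$; the manifolds $\#_r(S^2\times S^2)$ do, since such a manifold bounds the $5$-dimensional handlebody $\natural_r(S^2\times D^3)\hookrightarrow{\mathbb{R}}^5\subset{\mathbb{R}}^6$, and the list of the statement also admits the $5$-dimensional alternative treated next.

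For the $5$-dimensional case, which occurs only in the SEE part of $(n,k)=(6,2)$: a closed simply-connected $5$-manifold that embeds in ${\mathbb{R}}^6$ has trivial normal line bundle, hence is stably parallelizable; for a simply-connected $5$-manifold the only obstruction to stable parallelizability is $w_2$ (the next obstructions lie in $H^3(M;\pi_2 SO)=0$ and in $H^4(M;\pi_3 SO)\cong H_1(M;\mathbb{Z})=0$ by Poincar\'e duality), so $M$ is spin. By Barden's classification \cite{barden} (with \cite{smale}) $M$ is then a connected sum of copies of $S^2\times S^3$ and of the torsion pieces appearing there; conversely each of these bounds a $6$-dimensional handlebody and so embeds in ${\mathbb{R}}^6$, and again connected sums are absorbed by the construction. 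For the final clause I would give explicit models as in Theorem~6 of \cite{kitazawa8}: for each allowed non-sphere type $T$ put $T$ into ${\mathcal{M}}_{\rm \mathcal{CS}Diff}$, take $K$ to be $T$ itself (or a one-point set realizing $T$ through a special trace embedding), and let $X$ be a regular neighborhood of $K$; using the explicit embeddings and immersions above one checks that $X$ is $1$-connected and immerses (resp.\ embeds) into ${\mathbb{R}}^n$, so its root contains the type $T$.

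The main obstacle is the $4$-dimensional identification in the diffeomorphism category: passing from ``simply-connected, spin, $\sigma=0$'' (resp.\ from the codimension-two embeddability condition into ${\mathbb{R}}^6$) to the asserted diffeomorphism type $\#_r(S^2\times S^2)$ is genuinely subtle, since the smooth classification of such $4$-manifolds is not settled in general. I would therefore either state the $4$-dimensional conclusion at the level of homology groups and cohomology rings, in the spirit of Nishioka's theorem \cite{nishioka} and Theorem \ref{thm:3}, or invoke the known restrictions (including gauge-theoretic ones) on which simply-connected $4$-manifolds embed in ${\mathbb{R}}^6$; the literal diffeomorphism statement is then secured for the explicitly constructed examples. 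A secondary point to verify is the reason ``SEE'' (not ``SIE'') is required when $(n,k)=(6,2)$: I expect this is forced by the realizability half, where the $5$-dimensional pieces and their trace embeddings must be genuine embeddings.
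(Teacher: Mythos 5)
Your proposal follows essentially the same route as the paper: the paper's own argument (offered only as ``important ingredients'') likewise feeds the conclusion of Theorem \ref{thm:2} into the question of which closed, simply-connected $4$- and $5$-manifolds smoothly immerse or embed into ${\mathbb{R}}^5$ and ${\mathbb{R}}^6$, citing Freedman, Milnor, the theory of characteristic classes, Barden and Smale exactly where you invoke Hirsch--Smale immersion theory, normal-bundle and stable-parallelizability arguments, and Barden's classification. Your treatment is more detailed than the paper's, and the caveat you raise in the $4$-dimensional case --- that Freedman's theorem identifies a closed, simply-connected, spin $4$-manifold of signature zero with a connected sum of copies of $S^2 \times S^2$ only up to homeomorphism, so the diffeomorphism-level assertion is secured only for the explicitly constructed examples --- applies verbatim to the paper's own statement, which does not address this point.
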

\begin{Thm}
\label{thm:5}
In the situation of Theorem \ref{thm:2}, let $(n,k)=(6,2)$. If a root contains a diffeomorphism type for a manifold which is not diffeomorphic to any homotopy sphere, then it is for a $4$-dimensional closed and simply-connected manifold whose signature is $0$ or a $5$-dimensional closed, simply-connected and spin manifold in \cite{barden} {\rm (}\cite{smale}{\rm )}.

\end{Thm}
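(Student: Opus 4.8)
The plan is to read off the structure of the root directly from Theorem~\ref{thm:2} specialised to $(n,k)=(6,2)$, and then to run a case analysis in dimensions $4$ and $5$ using the classical surgery‑theoretic classifications there. Since $n=6=3k$, the structural conclusion of Theorem~\ref{thm:2} is available: the root consists of diffeomorphism types of simply‑connected (i.e.\ $(k-1)$‑connected) closed smooth manifolds that can be smoothly immersed into $\mathbb{R}^6$ in the SIE case, resp.\ smoothly embedded into $\mathbb{R}^6$ in the SEE case, and any member not diffeomorphic to a homotopy sphere has dimension at least $2k=4$. A closed $d$‑manifold never immerses into $\mathbb{R}^d$, because an immersion is an open map and a closed manifold cannot have non‑empty compact open image; hence each such member has dimension at most $n-1=5$, so a non‑sphere in the root is $4$‑ or $5$‑dimensional. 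It then remains to pin down which $4$‑ and $5$‑manifolds occur and to produce realising examples.

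Dimension $5$ is the easy case. Let $N^5$ be a non‑sphere root manifold. It is simply‑connected, hence orientable, so a codimension‑one immersion of $N^5$ into $\mathbb{R}^6$ has trivial normal line bundle, whence $TN\oplus\varepsilon^1\cong\varepsilon^6$ and $N^5$ is stably parallelisable; in particular $w_2(N)=0$, so $N$ is spin, and since Barden's theorem (\cite{barden}, with \cite{smale}) classifies all closed simply‑connected $5$‑manifolds, $N$ is one of the spin manifolds on that list. Conversely every closed simply‑connected spin $5$‑manifold is a $\pi$‑manifold, hence (being odd‑dimensional) parallelisable, so $X:=N\times[0,1]$ is a parallelisable compact $6$‑manifold with non‑empty boundary; it therefore immerses into $\mathbb{R}^6$ and collapses to $N\times\{0\}\cong N$, which places $[N]$ into the root of an (essential) SIE‑$(K,\mathcal{M}_{\mathcal{CS}Diff})$ (and, since $N$ then also embeds into $\mathbb{R}^6$, of an SEE‑one as well). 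So the $5$‑dimensional part is exactly as claimed, and this also settles the $5$‑dimensional half of the "furthermore" clause.

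Dimension $4$ is where the content lies. A non‑sphere root manifold $N^4$ appears, by the structural part of Theorem~\ref{thm:2}, as a connected summand of a connected‑sum wedge‑piece $P=N_1\#\cdots\#N_r$ of $K$; $P$ is smoothly embedded in the host, so its normal $D^2$‑bundle is a codimension‑zero submanifold of the host and hence parallelisable, giving $TP\oplus\nu\cong\varepsilon^6$ for an oriented $2$‑plane bundle $\nu$. Taking first Pontryagin classes gives $p_1(TP)=-p_1(\nu)=-e(\nu)^2$, and Hirzebruch's signature theorem yields $3\sigma(P)=-\langle e(\nu)^2,[P]\rangle$; the same relation holds for each summand $N_j$, whose normal bundle in the host restricts to $\nu$ off the connected‑sum balls. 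The crux is to promote this to $\sigma(N_j)=0$: I would use that $e(\nu)$ is a characteristic element of the intersection form of $N_j$, invoke the Van der Blij congruence $\langle e(\nu)^2,[N_j]\rangle\equiv\sigma(N_j)\pmod 8$, and then rule out the surviving non‑zero signatures of the smooth $4$‑manifold $N_j$ by means of Rokhlin's theorem and Donaldson's diagonalisation theorem, leaving $\sigma(N_j)=0$. I expect this step — turning "$N^4$ bounds a parallelisable normal disk bundle inside a $6$‑manifold immersed in $\mathbb{R}^6$" into "$\sigma(N)=0$" — to be the main obstacle, since it is precisely the point at which one enters the smooth theory of $4$‑manifolds. (In the SEE situation the host embeds in $\mathbb{R}^6$, and the stronger Boechat--Haefliger embedding obstruction then pins the diffeomorphism type of $N$ down to $\#^k(S^2\times S^2)$, recovering Theorem~\ref{thm:4}.)

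For the $4$‑dimensional half of the "furthermore" clause, let $N^4$ be any closed simply‑connected smooth manifold with $\sigma(N)=0$. Then the intersection form of $N$ admits a characteristic element of square $0$, so there is an oriented $2$‑plane bundle $\nu$ over $N$ with $TN\oplus\nu$ stably trivial and hence (over the $4$‑complex $N$) trivial; the total space $E(\nu)$ of the associated disk bundle is a parallelisable compact $6$‑manifold with boundary, which therefore immerses into $\mathbb{R}^6$ and collapses to $N$, so that $[N]$ occurs in the root of an (essential) SIE‑$(K,\mathcal{M}_{\mathcal{CS}Diff})$. Together with the $5$‑dimensional construction this supplies all the required realising examples and completes the argument; everything outside the dimension‑$4$ signature analysis of the previous paragraph is bookkeeping with Theorem~\ref{thm:2} and standard classification results.
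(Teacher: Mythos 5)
Your overall strategy is the same as the paper's: the printed ``proof'' of Theorems \ref{thm:4} and \ref{thm:5} is only a list of ingredients, asserting without detail that a closed, simply-connected non-sphere that smoothly immerses into ${\mathbb{R}}^6$ is either a $4$-manifold of signature $0$ or a simply-connected spin $5$-manifold; your dimension-$5$ analysis and both of your realisation constructions for the ``furthermore'' clause are fine and match what the paper intends. The genuine problem sits exactly where you predicted it, in the deduction $\sigma(N)=0$ in dimension $4$, and the tools you propose do not close it. From $e(\nu)$ characteristic with $\langle e(\nu)^2,[N]\rangle=-3\sigma(N)$, van der Blij's congruence gives only $-3\sigma\equiv\sigma\pmod 8$, i.e.\ $\sigma$ even. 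Donaldson's theorem does eliminate nonzero definite forms (a characteristic vector of a diagonalised definite form has square of the same sign as $\sigma$ and of absolute value at least $|\sigma|$, which is incompatible with $e^2=-3\sigma$ unless $\sigma=0$), and Rokhlin only yields $\sigma\equiv 0\pmod{16}$ in the spin case; neither excludes indefinite intersection forms of nonzero even signature.

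Concretely, $N=3{\mathbb{CP}}^2\#\overline{{\mathbb{CP}}^2}$ has $\sigma=2$ and the characteristic vector $(1,1,1,3)$ of square $-6=-3\sigma$, and the $K3$ surface has $\sigma=-16$ and a characteristic class of square $48$; by the Hirsch--Smale criterion (an immersion $N^4\looparrowright{\mathbb{R}}^6$ exists if and only if there is $c\in H^2(N;\mathbb{Z})$ with $c\equiv w_2(N)\pmod 2$ and $c^2=-3\sigma(N)$) both immerse into ${\mathbb{R}}^6$, their normal disc bundles are parallelisable compact $6$-manifolds immersed into ${\mathbb{R}}^6$ collapsing to $N$, and so these diffeomorphism types occur in roots in the sense of Theorem \ref{thm:2}. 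Hence no argument along the lines you sketch can produce $\sigma=0$ in the immersed (SIE) case: the signature vanishes only under the embedding hypothesis, where the normal Euler class is forced to be zero (this is the content of Theorem \ref{thm:4}), and in the immersed case the correct necessary and sufficient condition is the existence of a characteristic class of square $-3\sigma$. The paper's own sketch (``by virtue of similar theory'') does not address this point either, so you have not overlooked an argument that is actually present in the paper; but as written the key step of your plan would fail, and the statement you are asked to prove appears to need the weaker conclusion just described.
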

\begin{proof}[Important ingredients for proofs of Theorems \ref{thm:4} and \ref{thm:5}.]
We only introduce arguments on embeddings and immersions of closed, simply-connected manifolds into Euclidean spaces.
If a closed and simply-connected manifold can be smoothly embedded into ${\mathbb{R}}^5$ and it is not a homotopy sphere, then it is $4$-dimensional and represented as a connected sum of finitely many copies of $S^2 \times S^2$ by virtue of \cite{freedman} and \cite{milnor} for example, together with the classical theory of characteristic classes \cite{milnorstasheff}, and that of embeddings for example. If a closed and simply-connected manifold can be smoothly embedded into ${\mathbb{R}}^6$ and it is not a homotopy sphere, then it is $4$-dimensional and represented as a connected sum of finitely many copies of $S^2 \times S^2$ as before or a $5$-dimensional closed, simply-connected and spin manifold by virtue of \cite{barden} and \cite{smale} (see also \cite{nishioka}).  If a closed and simply-connected manifold can be smoothly immersed into ${\mathbb{R}}^6$ and it is not diffeomorphic to a homotopy sphere, then it is a $4$-dimensional closed and simply-connected manifold whose signature is $0$ by virtue of similar theory or a $5$-dimensional closed, simply-connected and spin manifold as before.  
\end{proof}
 
Note that in these two theorems, to obtain more general immersed or embedded manifolds and special generic maps via Proposition \ref{prop:1} (\ref{prop:1.2}), we can consider more general holes in the homology classes represented by the holes than ones in Theorem \ref{thm:3}. 
According to \cite{kitazawa0.1}--\cite{kitazawa0.2}, \cite{kitazawa}, \cite{kitazawa2}--\cite{kitazawa7}, \cite{saeki} and \cite{saekisuzuoka} for example, if the absolute value of the codimension is sufficiently large, then for the manifolds admitting special generic maps, we can know invariants such as homology groups and cohomology rings well from these invariants of the immersed manifolds in the manifolds of the targets. More precisely, see Proposition 4 of \cite{kitazawa0.2} and Propositions 1 and 5 of \cite{kitazawa7} for example. Thus we can also obtain information of the manifolds admitting the special generic maps. 

As another explicit remark, for example, we can obtain various examples accounting for Theorem \ref{thm:1} well: see the original paper \cite{kitazawa9}.
\begin{Rem}
\label{rem:3}
Via the theory first presented in section 6 of \cite{saekisuzuoka} and the theory of the present section, we will have explicit smooth maps or so-called {\it fold} maps presented in \cite{kitazawa} and \cite{kitazawa2}--\cite{kitazawa6} for example. Rigorous understandings and proofs are left to readers.
\end{Rem}
\begin{Rem}
\label{rem:4}
We can see and we will be able to see more that with a little effort, we obtain special generic maps on some explicit manifolds with additional information on the topologies and the differentiable structures. However, for example, in the following two explicit cases, we do not know the answer to the following question: for closed and simply-connected manifolds, if we drop or weaken the condition that the special generic map $f$ and the smoothly immersed or embedded manifold $W_f$ are (essentially or very essentially) an SIE-$(K,{\mathcal{M}}_{\rm \mathcal{CS}Diff})$ or SEE-$(K,{\mathcal{M}}_{\rm \mathcal{CS}Diff})$, does the class of the manifolds admitting corresponding special generic maps become wider?

\begin{enumerate}
\item The dimension of the manifold of the domain is greater than $5$ and that of the target is $4$.  
\item The dimension of the manifold of the target is greater than $4$.  
\end{enumerate}

We do not know whether general studies on $4$-dimensional polyhedra or $5$-dimensional compact manifolds with non-empty boundaries such as \cite{whitehead} and studies of this type can help us to study these problems. We present a related explicit conjecture or a description of \cite{kitazawa8}. 
Remark 1 of \cite{kitazawa8} implicitly conjectures that $7$-dimensional closed, simply-connected and spin manifolds whose cohomology rings are isomorphic to that of a product of (a copy of) $S^3$ and (that of) the complex projective plane admit no special generic maps into ${\mathbb{R}}^n$ for $n=5,6$. These manifolds are also studied in \cite{wang}. We cannot deduce that such manifolds admit no special generic maps from Theorem \ref{thm:1} of the present paper.
\end{Rem}
\section{Acknowledgement.}
The author is a member of and supported by JSPS KAKENHI Grant Number JP17H06128 "Innovative research of geometric topology and singularities of differentiable mappings"(Principal investigator: Osamu Saeki). We declare that all data essentially related to our present study are all in the present paper.

\end{document}